\newcommand{\pref}[1]{(\ref{#1})}
\newcommand{\stirling}[2]{{ #1\brace #2}}
\def\QEQ{{%
    \setbox0\hbox{$\cup$}%
    \rlap{\hbox to \wd0{\hss $\text{ }\vee$ \hss}}\box0
}}
\def\REQ{{%
\setbox0\hbox{$\cup$}%
    \rlap{\hbox to \wd0{\hss $\text{ }\wedge$ \hss}}\box0
}}
\author{ Jorge Garza Vargas \footnote{This work was supported by CONACYT Grant 222668 and by CIMAT's (Center for Research in Mathematics, Guanajuato) scholarship for bachelor's thesis.}  \bigskip \\ UC Berkeley  \\ jgarzavargas@berkeley.edu
}
\date{}
\title{\Large{\textbf{The traffic distribution of the squared unimodular random matrix and a formula for the moments of its ESD}}}
\begin{document}
\maketitle

\newtheorem{theorem}{Theorem}[section]
\newtheorem{definition}{Definition}[section]
\newtheorem{conjecture}{Conjecture}[section]
\newtheorem{lemma}{Lemma}[section]
\newtheorem{corollary}{Corollary}[section]
\newtheorem{proposition}{Proposition}[section]
\newtheorem{observation}{Observation}[section]
\newtheorem{remark}{Remark}[section]
\newtheorem{result}{Result}[section]
\newtheorem{example}{Example}[section]

\begin{abstract}
The $k$-th moment of the mean empirical spectral distribution (ESD) of the squared unimodular random matrix of dimension $N$ can be expressed in the form $N^{-2k-1} Q_k(N)$, where $Q_k(x)$ is a polynomial of degree $k+1$ with integer coefficients. We use tools from traffic-free probability to express the coefficients of this polynomial in terms of the number of quotients, with a certain property, of some colored directed graphs. The obtained result disproves the formula conjectured in A. Lakshminarayan, Z. Puchala, K. Zyczkowski (2014).
\end{abstract}

\textbf{Keywords:} Random Matrices, Non-Commutative Probability, Quantum Information Theory.

\section{Introduction}

For every positive integer $N$, define the unimodular random matrix  $U_N$ as the random square matrix of dimension $N$ whose entries are i.i.d. complex random variables uniformly distributed on $\mathbb{T}$, the unitary circle in $\mathbb{C}$. Denote by $U_N^*$ the adjoint matrix of $U_N$ and define the squared unimodular random matrix by
$$\rho_N := \frac{1}{N^2} U_NU_N^*. $$  
The moments of the mean empirical spectral distribution of $\rho_N$, i.e. the set of values of the form $\mathbb{E}[\mathrm{tr}(\rho_N^k)]$, with $k$ a positive integer and $\mathrm{tr}$ denoting the normalized trace, were studied by Lakshminarayan, Puchala and Zyczkowski  in \cite{Arul} in relation with problems in quantum information theory. This moments can be written as $N^{-2k-1}Q_k(N)$, where $Q_k(x)$ is a polynomial with integer coefficients of degree $k+1$. Here, we will calculate the injective traffic distribution of the unimodular ensemble and use it to derive an explicit formula for the coefficients of $Q_k(x)$ in terms of certain combinatorial numbers described in Section 3. In particular, this result disproves the following conjecture. 

\begin{conjecture}
\label{conj}
(\cite{Arul}) For $k$ and $N$ positive integers, it holds that 
$$
\mathbb{E}[\mathrm{tr}(\rho_N^k)]  N^{-2k-1} \sum_{j=2}^{k+1} (-1)^{k-j+1}f_{k-1,k-j+1} N^{j},$$

where $f_{k,j} := \frac{1}{k+1} {{2k+2}\choose{k-j}} {{k+j}\choose {j}}$, are the elements of the Borel triangle. 
\end{conjecture}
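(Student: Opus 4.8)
The plan is to compute $\mathbb{E}[\mathrm{Tr}((U_NU_N^*)^k)]=N^{2k+1}\,\mathbb{E}[\mathrm{tr}(\rho_N^k)]$ by the method of moments and then match the resulting integer polynomial $Q_k$ against the right-hand side coefficient by coefficient. First I would expand
$$\mathrm{Tr}\big((U_NU_N^*)^k\big)=\sum_{i_1,\dots,i_k=1}^{N}\ \sum_{j_1,\dots,j_k=1}^{N}\ \prod_{a=1}^{k}U_{i_aj_a}\,\overline{U_{i_{a+1}j_a}},\qquad i_{k+1}:=i_1,$$
and take the expectation entrywise. Since the entries are i.i.d.\ uniform on $\mathbb{T}$ one has $\mathbb{E}[U_{ij}^{p}\overline{U_{ij}}^{q}]=\delta_{pq}$, so a monomial survives exactly when every matrix position carries equally many unconjugated and conjugated factors, contributing $1$ in that case. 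Reading the trace as the alternating $2k$-cycle with black (row) vertices $v_1,\dots,v_k$ and white (column) vertices $w_1,\dots,w_k$, an index assignment is a pair $(\pi,\sigma)$ of set partitions of the black and of the white vertices, and the surviving assignments are precisely those for which the induced quotient of the colored directed cycle is \emph{balanced}, i.e.\ carries equally many forward ($U$) and backward ($\overline U$) edges over each pair of blocks. Counting injective block assignments then gives
$$Q_k(N)=\mathbb{E}\big[\mathrm{Tr}\big((U_NU_N^*)^k\big)\big]=\sum_{(\pi,\sigma)\ \mathrm{balanced}}N^{\underline{|\pi|}}\,N^{\underline{|\sigma|}},\qquad N^{\underline m}=N(N-1)\cdots(N-m+1),$$
which is exactly the injective traffic expansion over the colored directed graph quotients of Section~3.

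Before the general coefficient I would pin down the two extreme terms as anchors. The balanced quotients satisfy $|\pi|+|\sigma|\le k+1$, with equality only for the non-crossing (plane-tree) pairs; taking the top part $N^{|\pi|}$ of each falling factorial recovers $\deg Q_k=k+1$ and identifies the leading coefficient with the number of such plane trees, the Catalan number $C_k$. Since $f_{k-1,0}=\tfrac1k\binom{2k}{k-1}=C_k$, the $j=k+1$ term of the claimed formula is correct. At the other end the maximally merged quotients should reproduce the $j=2$ coefficient $(-1)^{k-1}f_{k-1,k-1}=(-1)^{k-1}C_{k-1}$, and I would check both endpoints against the small values $Q_1=N^2$ and $Q_2=2N^3-N^2$.

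For general $j$ I would expand every falling factorial through the signed Stirling numbers of the first kind, defined by $N^{\underline m}=\sum_\ell s(m,\ell)N^\ell$, which turns the balanced expansion into
$$[N^j]\,Q_k(N)=\sum_{a,b}B_k(a,b)\sum_{p+q=j}s(a,p)\,s(b,q),\qquad B_k(a,b)=\#\{(\pi,\sigma)\ \mathrm{balanced}:\ |\pi|=a,\ |\sigma|=b\}.$$
The remaining task is the purely enumerative identity
$$[N^j]\,Q_k(N)=(-1)^{k-j+1}f_{k-1,\,k-j+1}=(-1)^{k-j+1}\,\tfrac1k\binom{2k}{j-2}\binom{2k-j}{\,k-j+1}.$$
I would attack it by determining the bivariate generating function of the balance counts $B_k(a,b)$---equivalently, by classifying the balanced quotients of the $2k$-cycle through their block-pair incidence structure---and then matching the alternating Stirling contraction against the recursion for Borel's triangle, ideally by exhibiting a weight-preserving bijection from the weighted quotients onto the lattice-path family enumerated by $f_{k-1,k-j+1}$.

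The main obstacle is exactly this final identification. The alternating signs of the target come entirely from the falling factorials, whereas the counts $B_k(a,b)$ are honestly two-parameter and couple the black and white partitions through the color pattern of the cycle, so it is not evident that the signed Stirling sums collapse onto the single two-binomial product defining the Borel triangle. I expect the crux to be proving that, after the Stirling contraction, the balanced-quotient contributions regroup into the form $\tfrac1k\binom{2k}{j-2}\binom{2k-j}{k-j+1}$; concretely I would show that the $B_k(a,b)$ obey a recursion compatible with that of $f_{k-1,\cdot}$ and then propagate the two verified endpoints by induction on $j$. Establishing this factorization---that the full traffic count, and not merely its leading plane-tree part, is governed by the Borel triangle---is the single step on which the whole statement rests.
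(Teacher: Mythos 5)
There is a fatal gap, and it is not one you could have closed: the statement you set out to prove is false. It is precisely the conjecture of Lakshminarayan, Puchala and Zyczkowski that this paper \emph{disproves}. Your reduction is essentially the paper's own machinery: expanding $\mathbb{E}[\mathrm{Tr}((U_NU_N^*)^k)]$ over quotients of the alternating $2k$-cycle, keeping exactly those whose colored quotient is balanced (the paper's d.d.c.g.\ condition), obtaining falling factorials from the injective counts, and passing between the Pochhammer basis and the monomial basis via Stirling numbers (Lemmas \ref{cambioPoch-Usual} and \ref{cambioUsual-Poch}). Your bipartite variant, with separate partitions $\pi,\sigma$ of the row-type and column-type vertices, is a legitimate exact decomposition --- two entries of $U_N$ interact only when both the row and the column blocks coincide, so mixing black with white values is harmless --- and is not where the trouble lies; the paper instead sums over all partitions of the $2k$ vertices, writing $\mathcal{F}(2k,j)$ for the number of $j$-block partitions whose quotient is a d.d.c.g., which gives $\mathbb{E}[\mathrm{tr}(\rho_N^k)] = N^{-2k-1}\sum_{j=1}^{k+1}\mathcal{F}(2k,j)(N)_j$. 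The step you correctly single out as the crux --- that after the Stirling contraction the balanced-quotient counts collapse onto the Borel triangle --- is exactly what fails. Via Lemma \ref{cambioUsual-Poch} the conjecture is equivalent to $\mathcal{F}(2k,j) = \sum_{r=j}^{k+1}(-1)^{k-r+1}\stirling{r}{j}f_{k-1,k-r+1}$, and direct computation shows that for $k=6$, $j=3$ the right-hand side is $10988$ while the true count of balanced quotients is $11000$. No recursion for your $B_k(a,b)$ compatible with Borel's triangle can exist, because the identity it would imply is numerically false.

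It is worth noting why your verification strategy would not have detected this. The anchors you propose ($j=k+1$ with the Catalan leading coefficient, the $j=2$ end, and the small cases $k\le 2$) are precisely where the conjectured formula \emph{does} hold: the paper observes that equality in the equivalent form holds for $j=1$, $j=2$ and $j=k+1$ for all $k$, and for \emph{all} $j$ when $k\le 5$. The first deviation occurs at $k=6$, $j=3$, and grows with $k$ (compare the paper's two tables), with the Borel-triangle expression apparently always undercounting --- consistent with your own suspicion that the signed Stirling sums need not collapse onto a single two-binomial product. So an induction on $j$ propagating the verified endpoints would necessarily break in the middle range. The correct result replacing the conjecture is Theorem \ref{theorem}, which expresses the coefficients of $Q_k$ through the counts $\mathcal{F}(2k,j)$ and elementary symmetric polynomials; per the paper's final remarks, no closed form for the $\mathcal{F}(2k,j)$ away from $j\in\{1,2,k,k+1\}$ is known or even conjectured.
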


We acknowledge that this research was motivated by the suggestion made by the first autor in \cite{Arul}, who communicated that the conjectured formula presented systematic deviations when doing simulations for values of $k$ beyond 5.

\section{The traffic distribution method}

The theory of traffic-free probability, introduced by Camille Male in \cite{Maletraffics},  was developed in the context of free probability and was motivated by  the problem of studying the asymptotic freeness of permutation invariant families of matrices.  

Formally, we will analyse the traffic distribution of $\rho_N$. This method can be simply thought as a generalization of the moment method. First we will need to define the notion of $K$-graph operation defined by  C\'ebron, Dahlqvist and Male in \cite{Cebron-Dahlqvist-Male}. This concept was originally introduced in \cite{Mingo-Speicher-Graphs} by Mingo and Speicher as the notion of \emph{graph of matrices}. 

\begin{definition}
Let $K$ be a nonnegative integer. A $K$-graph operation is a directed connected graph (which may have loops and multiple edges) with $K$ ordered edges, and two distinguished vertices, which are named \emph{input} and \emph{output}, and will be denoted by $v_{\mathrm{in}}$ and $v_{\mathrm{out}}$ respectively. The input and the output  may be the same.
\end{definition}

 Intuitively, each $K$-graph will work  as a template to define a multilinear operation that takes $K$ square matrices of the same size and returns another square matrix of the same dimension. Given a $K$-graph $g=(V, E, v_{\mathrm{in}}, v_{\mathrm{out}})$ and random matrices $A_1, \dots, A_K$ of dimension $N$, we will denote by $Z_g(A_1\otimes \cdots \otimes A_K)$ the resulting random matrix of substituting, for $r=1, \dots, K$, each random matrix $A_r$ in the $r$-th edge of $g$,  and define it as
\begin{equation}
\label{defKop}
Z_g(A_1\otimes \cdots \otimes A_K) (i,j) := \sum_{\substack{\kappa : V \to [N]\\ \kappa(v_{\mathrm{in}})=j, \kappa(v_{\mathrm{out}})=i}} \prod_{r=1}^K A_r(\kappa(w_r), \kappa(v_r)),  
\end{equation}
where the $r$-th edge of $g$ goes from the vertex $v_r$ to the vertex $w_r$.

For example, if $g$ has vertices $v_0, v_1, \dots, v_K$, with $v_{\mathrm{in}}=v_0$ and $v_{\mathrm{out}}= v_K$,  and edges $(v_0, v_1),$ $\dots, (v_{K-1}, v_K)$, then  $Z_g(A_1\otimes \cdots \otimes A_K)$ is the usual product of matrices $A_KA_{K-1} \cdots A_1$. The set of all $K$-graph operations, with $K$ running over all nonnegative, is denoted by $\mathcal{G}$.

The traffic distribution of a family of random matrices $\textbf{A}= (A_i)_{i\in I}$, with $A_i$ of fixed dimension $d$, is the set of values of the form 
$$\mathbb{E}[\mathrm{tr}(Z_g(A_{i_1}^{\varepsilon_1} \otimes \cdots \otimes A_{i_K}^{\varepsilon_K}))],$$
where $g$ runs over all elements in $\mathcal{G}$, and for each $K$-graph, the indices $i_1, \dots, i_K$ are non necessarily distinct elements in $I$, while the $\varepsilon_i$ are elements in $\{1, \ast\}$. Note that in particular, the traffic distribution contains all the information of the mixed moments of the family $\textbf{A}$.

Given a $K$-graph operation $g=(V, E, v_{\mathrm{in}}, v_{\mathrm{out}})$, denote by $G=(V',E')$ the directed graph obtained by identifying the input and the output of $g$, and forgetting the \emph{input} and \emph{output} labels. Then, from \pref{defKop} we can see that 
$$\mathrm{tr}(Z_g(A_1\otimes \cdots \otimes A_K)) = \frac{1}{N} \sum_{\kappa: V' \to [N]} \prod_{r=1}^K A_r(\kappa(w_r'), \kappa(v_r')),  $$
where the $v_r'$ and  $w_r'$ are now vertices of $G$, and $(v_r', w_r')$ are the directed edges. Note that with this modification the sum on the right of the latter equation runs over all possible functions on the set of vertices. This observation allows us to restrict our analysis to directed graphs with ordered edges. For $G$ a directed connected graph, possibly with loops and multiple edges, with set of vertices $V$ and with $K$ ordered edges, $(v_1, w_1), \dots, (v_r, w_r)$, we denote 
$$\tau[G(A_1, \dots, A_K)] :=\frac{1}{N} \mathbb{E} \left[ \sum_{\kappa: V \to [N]} \prod_{r=1}^K A_r (\kappa(w_r), \kappa(v_r)) \right].$$
The function $\tau$ is called the traffic state in analogy with the theory of non-commutative probability. Note that the traffic distribution of a family of random matrices $\textbf{A}$ can also be thought as the set of values that $\tau$ takes when evaluated on all possible directed graphs in which elements of $\textbf{A}$ or $\textbf{A}^{\ast}$ are put in their edges in all possible ways. The injective version of the traffic state, introduced in \cite{Maletraffics},  is denoted by $\tau^0[\cdot]$ and is defined by
$$\tau^{0}[G(A_1, \dots, A_K)]:=\frac{1}{N} \mathbb{E} \left[ \sum_{\substack{\kappa :V \to [N]\\ \kappa \text{ injective}}} \prod_{r=1}^K A_r(\kappa(w_r), \kappa(v_r)) \right].$$
A direct computation proves the following relation between the traffic state and the injective traffic state.
\begin{lemma}
If $G=(V, E)$ is a directed graph with $K$ ordered edges and $A_1, \dots, A_K$ are $N\times N$ random matrices, then 
$$\tau [G(A_1, \dots, A_K)] = \sum_{\pi \in \mathcal{P}(V)}  \tau^0[G^{\pi}(A_1, \dots, A_K)],$$
where $\mathcal{P}(V)$ denotes the set of partitions of $V$ and for each $\pi \in \mathcal{P}(V)$,  $G^\pi$ denotes the quotient graph induced by $\pi$, that is, $G^\pi$ is the graph obtained by taking $G$ and identifying all vertices that are in a same block in $\pi$, without erasing any edges.
\end{lemma}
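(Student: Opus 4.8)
The plan is to partition the sum over all maps $\kappa : V \to [N]$ according to the fibers of $\kappa$, exploiting the elementary fact that every such map is, in a canonical way, an injective map defined on a suitable quotient of $V$. This is the standard mechanism by which injective versions of multiplicative-type sums recover the full sum, and no heavy machinery is needed.

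First I would associate to each function $\kappa : V \to [N]$ the partition $\pi = \ker(\kappa) \in \mathcal{P}(V)$ obtained by declaring two vertices to lie in the same block precisely when $\kappa$ assigns them the same value. Sorting the functions by this induced partition yields the disjoint decomposition
$$\sum_{\kappa : V \to [N]} \;=\; \sum_{\pi \in \mathcal{P}(V)} \;\; \sum_{\substack{\kappa : V \to [N] \\ \ker(\kappa) = \pi}}.$$
Next, for a fixed $\pi$, I would observe that the maps $\kappa$ with $\ker(\kappa) = \pi$ are in bijection with the \emph{injective} maps $\bar\kappa : V/\pi \to [N]$ via $\kappa = \bar\kappa \circ q$, where $q : V \to V/\pi$ is the canonical quotient map; the condition $\ker(\kappa) = \pi$ is exactly what forces $\bar\kappa$ to be injective. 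Under this correspondence, for each edge $r$ running from $v_r$ to $w_r$ in $G$ one has $\kappa(v_r) = \bar\kappa(q(v_r))$ and $\kappa(w_r) = \bar\kappa(q(w_r))$, so that each factor $A_r(\kappa(w_r), \kappa(v_r))$ equals $A_r(\bar\kappa(q(w_r)), \bar\kappa(q(v_r)))$.

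The crucial point, and the only place where a little care is required, is matching the right-hand side with the combinatorial structure of $G^\pi$. By definition $G^\pi$ has vertex set $V/\pi$ and retains all $K$ ordered edges of $G$, with the $r$-th edge now running from $q(v_r)$ to $q(w_r)$; it is precisely because the quotient is taken ``without erasing any edges'' that the edge assignment is preserved. Consequently the product $\prod_{r=1}^K A_r(\bar\kappa(q(w_r)), \bar\kappa(q(v_r)))$ is exactly the summand in the definition of $\tau^0[G^\pi(A_1, \dots, A_K)]$ evaluated on the injective map $\bar\kappa$. Substituting this into the decomposition above, taking expectations (which commute with the finite sums), and dividing by $N$ then gives
$$\tau[G(A_1, \dots, A_K)] \;=\; \sum_{\pi \in \mathcal{P}(V)} \tau^0[G^\pi(A_1, \dots, A_K)],$$
which is the claimed identity. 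I do not anticipate any genuine obstacle: the entire argument rests on the fiber decomposition of functions together with the bookkeeping that the quotient $G^\pi$ faithfully transports each edge of $G$, so the sole thing to verify is this compatibility of the edge labels with the quotient map.
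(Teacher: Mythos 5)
Your proof is correct and is precisely the ``direct computation'' the paper alludes to without writing out: decomposing the sum over all maps $\kappa : V \to [N]$ according to the partition $\ker(\kappa)$, identifying the maps with a given kernel $\pi$ with injective maps on $V/\pi$, and checking that the edge labels transport to $G^\pi$. There is no gap, and no difference in approach to report.
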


In particular, note that if $A$ is a random matrix, $k$ is a positive integer, and $C_k$ is the directed cycle of $k$ vertices, we have that 
\begin{equation}
\label{decoftrace}
\mathbb{E}[\mathrm{tr}(A^k)] = \tau[C_k(A)]= \sum_{\pi \in \mathcal{P}(k)} \tau^0[C_k^{\pi}(A)],
\end{equation}
where $C_k(A)$ is a shorthand notation for $C_k(A, \dots, A)$.

In Section 2.3 of \cite{Cebron-Dahlqvist-Male}, it is noted that, using the M\"obius inversion formula, the traffic state can be retrieved from the injective traffic state. Hence, these two functionals posses essentially the same information, nervetheless, when working with random matrices, it is usually easier to compute the values  of injective version of the traffic state, which is known as finding the injective traffic distribution of the given family of random matrices.   

\section{The formula}

Once we have equation \pref{decoftrace}, to find the wanted formula it will be enough to study for every $k$ and every $\pi \in \mathcal{P}(k)$, the value of $\tau^0[C_k^{\pi}(\rho_N)]$. To do this we will study the injective  traffic distribution of the unitary ensemble $U_N$. The latter is possible do to the fact that the mixed moments of the uniform probability measure on $\mathbb{T}$ have a nice formula. Explicitly, for $k$ and $l$ non negative integers
\begin{equation}
\label{moments}
\frac{1}{2\pi}\int_{\mathbb{T}} z^{k} \overline{z}^l dz = \delta_{kl}.
\end{equation}

Let $G=(V,E)$ be a directed connected graph. To every edge $e\in E$ we will assign either $U_N$ or $U_N^*$. Fix a labeling of this sort and denote it by $G(U_N, U_N^*)$.   Given $G(U_N, U_N^*)$, let $c:E\to \{\text{red},\text{blue}\}$, be the coloring of $G$ such that  $c(e)=\text{red}$ if $e$ is labeled with $U_N$ and $c(e)=\text{blue}$ if $e$  is labeled with $U_N^*$. Let $E_1=\{e\in E: c(e)=\text{red}\}$ and $E_2=\{e\in E: c(e)=\text{blue}\}$. Denote by $\hat{G}(U_N, U_N^*)$ the resulting colored graph.

\begin{figure}[htbp]
\begin{center}
\psset{xunit=1.2cm,yunit=1.2cm,algebraic=true,dimen=middle,dotstyle=o,dotsize=5pt 0,linewidth=1.6pt,arrowsize=3pt 2,arrowinset=0.25}
\begin{pspicture*}(-1.150069508506187,1.515490504884702)(10.55807888747329,3.6520927006487005)
\pscircle[linewidth=2.pt](-0.5754069700607644,2.1485782891127525){0.1}
\pscircle[linewidth=2.pt](1.4245930299392353,2.1485782891127525){0.1}
\pscircle[linewidth=2.pt](3.424593029939236,2.1485782891127525){0.1}
\pscircle[linewidth=2.pt](2.4245930299392353,3.1485782891127525){0.1}
\psline[linewidth=1.2pt]{->}(-0.5754069700607644,2.248578289112753)(1.3645876831703316,2.247219736684062)
\psline[linewidth=1.2pt]{->}(1.4245930299392353,2.048578289112752)(-0.5134963481544288,2.048407911796803)
\psline[linewidth=1.2pt]{->}(1.4921040854464376,2.2223499461654685)(2.33789951264774,3.0647350238863167)
\psline[linewidth=1.2pt]{->}(3.3245930299392357,2.1485782891127525)(1.547553265520604,2.1493770914428523)
\parametricplot[linewidth=1.2pt]{-2.8174164875305885}{2.8076043480745936}{1.*0.3067301387810374*cos(t)+0.*0.3067301387810374*sin(t)+3.719196759830717|0.*0.3067301387810374*cos(t)+1.*0.3067301387810374*sin(t)+2.146354561704604}
\pscircle[linewidth=2.pt](5.224593029939234,2.1485782891127525){0.1}
\pscircle[linewidth=2.pt](7.224593029939236,2.1485782891127525){0.1}
\pscircle[linewidth=2.pt](9.224593029939237,2.1485782891127525){0.1}
\pscircle[linewidth=2.pt](8.224593029939236,3.1485782891127525){0.1}
\psline[linewidth=1.2pt,linestyle=dashed,linestyle=dashed,dash=3pt 3pt,linecolor=blue]{->}(5.224593029939234,2.248578289112753)(7.164587683170332,2.247219736684062)
\psline[linewidth=1.2pt,linecolor=red]{->}(7.224593029939236,2.048578289112752)(5.286503651845571,2.048407911796803)
\psline[linewidth=1.2pt,linecolor=red]{->}(7.292104085446439,2.222349946165467)(8.137899512647742,3.0647350238863167)
\psline[linewidth=1.2pt,linestyle=dashed,linestyle=dashed,dash=3pt 3pt,linecolor=blue]{->}(9.124593029939238,2.1485782891127525)(7.347553265520601,2.1493770914428523)
\parametricplot[linewidth=1.2pt,linestyle=dashed,dash=3pt 3pt,linecolor=blue]{-2.8174164875305885}{2.8076043480745936}{1.*0.3067301387810374*cos(t)+0.*0.3067301387810374*sin(t)+9.519196759830718|0.*0.3067301387810374*cos(t)+1.*0.3067301387810374*sin(t)+2.146354561704604}
\rput[tl](0.37145023696211354,2.680909884392338){$U_N^*$}
\rput[tl](2.3080524327261102,2.5622097624054487){$U_N^*$}
\rput[tl](4.007990337833158,2.519046081682944){$U_N^*$}
\rput[tl](0.3174956360589823,1.866183542772674){$U_N$}
\rput[tl](1.3908242173728795,2.883055649449873){$U_N$}
\psline[linewidth=1.2pt]{->}(3.5345462863189576,2.3912781838167176)(3.4299511277512162,2.2484346398764243)
\psline[linewidth=1.2pt,linecolor=blue]{->}(9.338315969687383,2.3940752063706405)(9.229951127751216,2.2484346398764115)
\end{pspicture*}
\end{center}
\caption{ A labeled graph with its respective colored graph (blue arrows are displayed with a dotted style).}
\end{figure}
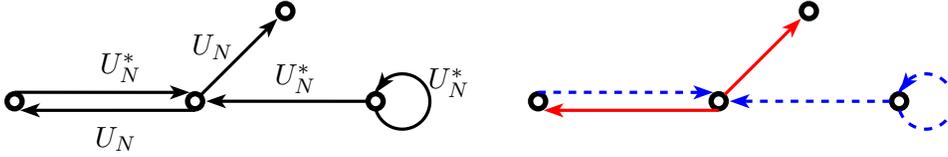

    Denote by $\tau^0[G(U_N, U_N^*)]$  the value of the injective traffic state evaluated on $G$ with the fixed choice of assignments of $U_N$ and $U_N^*$ to the edges. We have
\begin{equation}
\label{traffdistU}
\tau^0[G(U_N, U_N^*)]= \frac{1}{N} \sum_{\substack{\kappa :V \to [N]\\ \kappa \text{ injective}}} \mathbb{E}\left[ \prod_{(u,v)\in E_1} U_N(\kappa(v), \kappa(u)) \prod_{(r, s)\in E_2} \overline{U_N(\kappa(r), \kappa(s))} \right].
\end{equation}

Note that since the expectation of the product of independent random variables factorizes, by \pref{moments} each term in the sum in the right of \pref{traffdistU} is either 0 or 1. Now we will characterize those terms that do not vanish in terms of the structure of $G$ and the coloring $c$, for this we need the next definition.

\begin{definition} A connected directed graph colored in red and blue is called a \textbf{double directed colored graph  (d.d.c.g.)}, if for every $u, v \in V$, not necessarily distinct, the number of red edges going from $u$ to $v$ is equal to the number of blue edges going from $v$ to $u$.  We will denote the family of d.d.c.g's by $\mathcal{D}$. 
\end{definition}

\begin{figure}[htbp]
\begin{center}
\newrgbcolor{qqzzqq}{0. 0.6 0.}
\newrgbcolor{qqqqcc}{0. 0. 0.8}
\psset{xunit=1.2cm,yunit=1.2cm,algebraic=true,dimen=middle,dotstyle=o,dotsize=5pt 0,linewidth=0.8pt,arrowsize=3pt 2,arrowinset=0.25}
\begin{pspicture*}(2.3558370121937435,-1.3615470970400518)(16.82816463602887,2.791946032670507)
\pscircle[linewidth=2.pt](-2.,2.4641016151377557){0.09836262875318949}
\pscircle[linewidth=2.pt,linecolor=qqzzqq](0.,2.4641016151377553){0.09670872187124512}
\pscircle[linewidth=2.pt](1.,0.7320508075688774){0.09759296348517955}
\pscircle[linewidth=2.pt,linecolor=qqzzqq](0.,-1.){0.1}
\pscircle[linewidth=2.pt](-2.,-1.){0.1}
\pscircle[linewidth=2.pt,linecolor=qqzzqq](-3.,0.732050807568879){0.1003207474366654}
\pscircle[linewidth=2.pt](4.927781356839965,2.5042230835599977){0.09836262875318856}
\pscircle[linewidth=2.pt](7.927781356839964,0.7721722759911196){0.09759296348518111}
\pscircle[linewidth=2.pt](4.927781356839964,-0.9598785315777578){0.1}
\pscircle[linewidth=2.pt](6.,0.7320508075688774){0.09912084741145608}
\psline[linewidth=1.2pt,linestyle=dashed,dash=3pt 3pt, linecolor=blue]{->}(-1.90249676408703,2.4511275748365398)(-0.09612428055020428,2.453485609548297)
\psline[linewidth=1.2pt,linecolor=red]{->}(0.06988202916291382,2.397250445469278)(0.9383275722856385,0.8076874268854257)
\psline[linewidth=1.2pt,linestyle=dashed,dash=3pt 3pt, linecolor=blue]{->}(0.964312907900916,0.6412167942295391)(0.054255243755362725,-0.915997806427176)
\psline[linewidth=1.2pt,linecolor=red]{->}(-0.099630188238299,-1.0085921820046546)(-1.90057671059922,-1.010724249378336)
\psline[linewidth=1.2pt,linestyle=dashed,dash=3pt 3pt, linecolor=blue]{->}(-2.093597736422374,-0.9647939815286102)(-2.9599481060278943,0.6400720156899986)
\psline[linewidth=1.2pt,linecolor=red]{->}(-2.9792010327377914,0.8301918047468004)(-2.0566766644061754,2.3837090629514526)
\psline[linewidth=1.2pt,linestyle=dashed,dash=3pt 3pt, linecolor=blue]{->}(5.019697494226831,2.53924636757387)(6.06723914849081,0.8217124337338908)
\psline[linewidth=1.2pt,linecolor=red]{->}(5.904183041215201,0.7574290029498949)(4.873209882867254,2.391212478368995)
\psline[linewidth=1.2pt,linestyle=dashed,dash=3pt 3pt, linecolor=blue]{->}(4.844919183283536,-0.9038976402530898)(5.868198257539013,0.6938199575398962)
\psline[linewidth=1.2pt,linecolor=red]{->}(6.053035634463368,0.6483121434486915)(5.056371421970577,-0.9661164771977976)
\psline[linewidth=1.2pt,linecolor=red]{->}(6.06723914849081,0.8217124337338907)(7.827467491983496,0.8570923937436039)
\psline[linewidth=1.2pt,linestyle=dashed,dash=3pt 3pt, linecolor=blue]{->}(7.931475806877554,0.6746492655743997)(6.053035634463368,0.6483121434486915)
\pscircle[linewidth=2.pt](10.,0.7){0.1}
\pscircle[linewidth=2.pt](12.,0.7){0.1}
\pscircle[linewidth=2.pt](14.,0.7){0.1}
\psline[linewidth=1.2pt,linestyle=dashed,dash=3pt 3pt,linecolor=qqqqcc]{->}(14.,0.8)(12.070186659442916,0.7983671480442277)
\psline[linewidth=1.2pt, linestyle=dashed,dash=3pt 3pt, linecolor=qqqqcc]{->}(14.,0.6)(12.070186659442916,0.6012721613954357)
\parametricplot[linewidth=1.2pt,linecolor=red]{0.7853981633974483}{2.356194490192345}{1.*1.4142135623730951*cos(t)+0.*1.4142135623730951*sin(t)+13.|0.*1.4142135623730951*cos(t)+1.*1.4142135623730951*sin(t)+-0.2}
\parametricplot[linewidth=1.2pt,linecolor=red]{3.9269908169872414}{5.497787143782138}{1.*1.4142135623730951*cos(t)+0.*1.4142135623730951*sin(t)+13.|0.*1.4142135623730951*cos(t)+1.*1.4142135623730951*sin(t)+1.6}
\psline[linecolor=red]{->}(13.941136238280428,0.8555863683258547)(14.,0.8)
\psline[linecolor=red]{->}(13.950430688777374,0.5527743768269571)(14.,0.6)
\psline[linewidth=1.2pt,linecolor=red]{->}(10.,0.8)(11.9090383708087,0.8013567315269511)
\psline[linewidth=1.2pt,linestyle=dashed,dash=3pt 3pt,linecolor=qqqqcc]{->}(10.,0.6)(11.91357583949248,0.6017081094406034)
\psline[linecolor=red]{->}(13.83224735307488,0.9434003425309341)(14.,0.8)
\psline[linecolor=red]{->}(13.789707259154389,0.42681525545255083)(14.,0.6)
\rput[tl](9.819144979642351,1.0939794296359572){$u$}
\rput[tl](11.809360437628646,1.0939794296359572){$v$}
\end{pspicture*}
\caption{In the previous figure blue arrows are displayed with a dotted style. The graph on the left is a d.d.c.g. On the other hand, the graph on the right is not a d.d.c.g. because there is one red edge going from $u$ to $v$ but no blue edge going from $v$ to $u$.}
\end{center}
\end{figure}
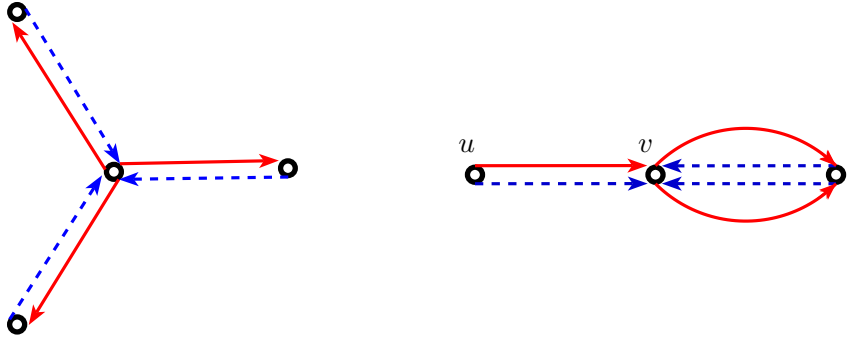

\begin{lemma} Let $\kappa: V \to [N]$ be injective. Then
\begin{equation}
\label{eqexpectation}
\mathbb{E}\left[ \prod_{(u,v)\in E_1} U_N(\kappa(v), \kappa(u)) \prod_{(r, s)\in E_2} \overline{U_N(\kappa(r), \kappa(s))} \right]\neq 0,
\end{equation}
 if and only if the resulting colored graph $\hat{G}(U_N, U_N^*)=(V, E, c)$ is in $\mathcal{D}.$ 
\end{lemma}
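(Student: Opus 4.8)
The plan is to exploit the independence of the entries of $U_N$ together with the moment formula \pref{moments}. First I would regroup the product inside \pref{eqexpectation} according to the matrix position each factor touches. Because $\kappa$ is injective, it is a bijection from $V$ onto its image, so every red edge $(u,v)\in E_1$ contributes the single factor $U_N(\kappa(v),\kappa(u))$ sitting at the position $(\kappa(v),\kappa(u))$, while every blue edge $(r,s)\in E_2$ contributes the conjugated factor $\overline{U_N(\kappa(r),\kappa(s))}$ sitting at the position $(\kappa(r),\kappa(s))$. For a pair of indices $(a,b)\in[N]^2$ lying in the image, write $p_{ab}$ for the number of red edges producing $U_N(a,b)$ and $q_{ab}$ for the number of blue edges producing $\overline{U_N(a,b)}$, so that the product becomes $\prod_{(a,b)} U_N(a,b)^{p_{ab}}\,\overline{U_N(a,b)}^{q_{ab}}$, the product ranging over the finitely many positions that actually occur.

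Next I would use that the entries $U_N(a,b)$ are i.i.d.\ and that, for distinct positions, the corresponding factors are independent. The expectation of the product then factorizes as $\prod_{(a,b)} \mathbb{E}\!\left[U_N(a,b)^{p_{ab}}\,\overline{U_N(a,b)}^{q_{ab}}\right]$. Each entry is uniform on $\mathbb{T}$, so by \pref{moments} one has $\mathbb{E}\!\left[U_N(a,b)^{p_{ab}}\,\overline{U_N(a,b)}^{q_{ab}}\right]=\delta_{p_{ab},q_{ab}}$. Consequently the whole expectation equals $1$ when $p_{ab}=q_{ab}$ for every occurring position $(a,b)$, and equals $0$ otherwise; in particular it is nonzero precisely when all these equalities hold.

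It remains to recognize the condition $p_{ab}=q_{ab}$ as membership in $\mathcal{D}$. Here injectivity of $\kappa$ is essential: setting $v=\kappa^{-1}(a)$ and $u=\kappa^{-1}(b)$, the quantity $p_{ab}$ counts exactly the red edges from $u$ to $v$, while $q_{ab}$ counts exactly the blue edges from $v$ to $u$. Thus the equalities $p_{ab}=q_{ab}$, ranging over all occurring positions, say exactly that for every ordered pair of vertices the number of red edges from one to the other equals the number of blue edges in the reverse direction, which is the defining property of a d.d.c.g. Positions $(a,b)$ not in the image of $\kappa$ impose no constraint, since no factor sits there, giving the desired equivalence.

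The computation itself is routine; the one point demanding care is the bookkeeping in the first step, namely checking that injectivity makes the correspondence between matrix positions and ordered pairs of vertices a bijection, so that $p_{ab}$ and $q_{ab}$ faithfully record the red and blue edge multiplicities of $G$ rather than an aggregate over several identified vertices. Were $\kappa$ merely a function, as in the non-injective traffic state, distinct vertex pairs could collapse to the same position and this clean translation would fail, which is exactly why the characterization is phrased for the injective version.
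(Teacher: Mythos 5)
Your proof is correct and follows essentially the same route as the paper's: factorize the expectation over independent entries of $U_N$, apply the moment identity $\mathbb{E}[z^{p}\bar{z}^{q}]=\delta_{pq}$ to each group of dependent factors, and use injectivity of $\kappa$ to translate the resulting equalities into the d.d.c.g.\ condition. The only cosmetic difference is that you index the groups by matrix positions $(a,b)$ while the paper indexes them by ordered vertex pairs $(u,v)$; under the injectivity of $\kappa$ these bookkeepings coincide.
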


\begin{proof}
Since $\kappa$ is injective, for any $\{u, v\}$ and $\{r,s\}$ different sets of vertices, the sets $\{\kappa(u),\kappa(v)\}$ and $\{\kappa(r), \kappa(s)\}$ are different. In this case, given that the entries of $U_N$ are independent, $U_N^{\varepsilon_1}(\kappa(u),\kappa(v))$ and $U_N^{\varepsilon_2}(\kappa(r), \kappa(s))$ are independent random variables, for any $\varepsilon_1, \varepsilon_2 \in \{1, \ast\}$.

 Hence, edges with different endpoints contribute with independent random variables in the product in \pref{eqexpectation}.  Moreover, since $U_N(i,j)$ and $U_N^*(k,l)$ are only dependent if $i=l$ and $j=k$, we will have that edges with the same endpoints contribute with dependent random variables only if they go in the same direction and are of the same color, or if they go in opposite directions and are of different colors. Hence, the left side of \pref{eqexpectation} factorizes in the following way
$$\prod_{u , v\in V}\mathbb{E}\left[ U_N(\kappa(v), \kappa(u))^{d_1(u,v)} \overline{U_N}(\kappa(u), \kappa(v))^{d_2(v,u)}\right], $$
where $d_1(u,v)$ denotes the number of red edges going from $u$ to $v$ and $d_2(v,u)$ the number of blue edges going from $v$ to $u$.   By \pref{moments}, the latter product is different from zero only if for every $u$ and $v$, not necessarily distinct, it is satisfied that $d_1(u,v) = d_2(v,u)$, in which case the product equals to $1$.  Now note that the condition $d_1(u,v) = d_2(v,u)$ for every $u, v \in V$ translates into the condition $\hat{G}(U_N, U_N^*) \in \mathcal{D}$. 
\end{proof}
 
Combining \pref{traffdistU} with the previous lemma one sees that  
\begin{equation*}
\tau^0[G(U_N, U_N^*)]= \frac{1}{N} \sum_{\substack{\kappa :V \to [N]\\ \kappa \text{ injective}}}  \mathbbm{1}_{\{ \hat{G}(U_N, U_N^*) \in \mathcal{D}\}}= \begin{cases} \frac{(N)_{|V|}}{N} & \text{if } \hat{G}(U_N, U_N^*) \in \mathcal{D},  \\
0 & \text{otherwise.}
\end{cases}
\end{equation*} 
where, for $j$ a positive integer, $(N)_j:= N(N-1)\cdots(N-j+1)$ denotes the Pochhammer symbol.

Now note that if $C_k$ is the directed cycle with $k$ edges and $\rho_N$ is the squared unimodular matrix, then $\tau[C_k(\rho_N)] = N^{-2k} \tau[C_{2k}(U_N, U_N^*)]$, where the labeling $C_{2k}(U_N, U_N^*)$ alternates between $U_N$ and $U_N^*$, i.e. the resulting colored graph $\widehat{C_{2k}}(U_N, U_N^*)$ alternates between red and blue. From the characterization of the injective traffic distribution obtained before we get that 
$$\tau[C_{2k}(U_N, U_N^*)] = \frac{1}{N} \sum_{\pi \in \mathcal{P}(2k) } \mathbbm{1}_{\{\widehat{C_{2k}^{\pi}}(U_N, U_N^*)\in \mathcal{D}\}} (N)_{|\pi|}.$$
Denote by $\mathcal{F}(2k, j)$ the number of partitions $\pi \in \mathcal{P}(2k)$ with $j$ blocks and such that $\widehat{C_{2k}^{\pi}}\in \mathcal{D}$. Since $C_{2k}$ has $2k$ edges, if $|\pi| > k+1$, then $\widehat{C_{2k}^{\pi}}$ can not be a d.d.c.g., so $\mathcal{F}(2k, j) =0$ for all $j > k+1$. From what we have developed until now it follows that
\begin{equation}
\label{formulapochhammer}
\mathbb{E}[\mathrm{tr}(\rho_N^k)] = \tau[C_{2k}(U_N, U_N^*)] = N^{-2k-1} \sum_{j=1}^{k+1} \mathcal{F}(2k, j)(N)_j.
\end{equation}

\subsection{The Pochhammer symbols and the change of basis}

 The formula that we have provided for $N^{2k+1}\mathbb{E}[\mathrm{tr}(\rho_N^k)]$ is clearly a polynomial in $N$, nevertheless, it is not clear what its coefficients are, which, for example, makes it difficult to compare to the conjecture made in \cite{Arul}.
 
In traffic-free probability, the Pochhammer symbols appear frequently, so  it is convenient to be able to go from the Pochhammer symbols to the monomials of the form $N^k$ and vice versa.  More precisely, the family of polynomials $\{(x)_{j}\}_{j=0}^{\infty}$, where $(x)_0:=1$, form a basis for the space of polynomials. Below we will provide the formulas to change from the Pochhammer symbols basis to the usual basis $\{x^j\}_{j=1}^\infty$ and to do the reverse process as well.  

We will denote the $k$-th symmetric elementary polynomial in $n$ variables by $e_k(x_1, \dots, x_n)$, i.e.
$$e_k(x_1, \dots, x_n) := \sum_{1\leq j_1 < j_2 < \cdots < j_k\leq n} x_{j_1}\cdots x_{j_k},$$
when $1\leq k\leq n$, and define $e_0(x_1, \dots, x_n):=1$ and $e_k(x_1, \dots, x_n) = 0$ for $k > n$. Using the Vieta relations, for $k>0$ we get that  
$$(x)_k = \sum_{j=1}^k (-1)^{k-j} e_{k-j}(1, \dots, k-1)x^j,$$
which yields the following result.
 \bigskip
 
\begin{lemma}
\label{cambioPoch-Usual}
If $\sum_{j=1}^n a_j x^j = \sum_{j=1}^n b_j(x)_j$ then $a_j = \sum_{k=j}^n (-1)^{k-j}e_{k-j}(1, \dots, k-1) b_k$ for every $j=1, \dots, n$. 
\end{lemma}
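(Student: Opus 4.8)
The plan is to substitute the Vieta expansion of $(x)_k$ displayed immediately above the statement into the right-hand side of the hypothesis, and then compare coefficients of the monomials $x^j$. First I would rewrite the hypothesis as $\sum_{j=1}^n a_j x^j = \sum_{k=1}^n b_k (x)_k$ and replace each $(x)_k$ by $\sum_{j=1}^k (-1)^{k-j} e_{k-j}(1,\dots,k-1)\, x^j$, which turns the right-hand side into a double sum.

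Next I would interchange the order of summation. As $(k,j)$ ranges over the triangular region $1 \le j \le k \le n$, reorganizing so that the outer index is $j$ running from $1$ to $n$ and the inner index is $k$ running from $j$ to $n$ rewrites the right-hand side as $\sum_{j=1}^n \big( \sum_{k=j}^n (-1)^{k-j} e_{k-j}(1,\dots,k-1)\, b_k \big) x^j$.

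Finally, since the monomials $\{x^j\}_{j=1}^n$ are linearly independent, I would equate the coefficient of each $x^j$ on both sides of $\sum_{j=1}^n a_j x^j = \sum_{j=1}^n \big( \sum_{k=j}^n (-1)^{k-j} e_{k-j}(1,\dots,k-1)\, b_k \big) x^j$, which yields $a_j = \sum_{k=j}^n (-1)^{k-j} e_{k-j}(1,\dots,k-1)\, b_k$ for every $j=1,\dots,n$, as claimed.

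The only step requiring care is the bookkeeping in the interchange of summations, namely translating the triangular constraint $1 \le j \le k \le n$ into the reversed nested bounds; everything else is immediate from the already-computed expansion of the Pochhammer symbol together with the uniqueness of coefficients in the monomial basis, so I do not anticipate any genuine obstacle.
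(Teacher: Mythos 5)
Your proposal is correct and follows exactly the route the paper intends: the paper derives the Vieta expansion $(x)_k = \sum_{j=1}^k (-1)^{k-j} e_{k-j}(1,\dots,k-1)x^j$ and states that it ``yields'' the lemma, which is precisely your substitution, interchange of the triangular double sum, and comparison of coefficients in the monomial basis. You have merely spelled out the bookkeeping the paper leaves implicit, so there is nothing to add.
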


To change from the usual basis to the Pochhammer basis we will use the following well known combinatorial identity
$$N^k = \sum_{j=1}^k \stirling{k}{j} (N)_j,$$
where the numbers $\stirling{k}{j}$ are the Stirling numbers of the second kind, which denote the number of partitions with $j$ blocks of a set with $k$ elements. This equality can easily be proven by a double counting argument for every positive integer $N$. Since it is valid for an infinite number of values it also holds as an equality of polynomials, yielding the following result. 
\bigskip

\begin{lemma}
\label{cambioUsual-Poch}
If $\sum_{j=1}^n a_j x^j = \sum_{j=1}^n b_j(x)_j$ then $b_j = \sum_{k=j}^n \stirling{k}{j}a_k$, for every $j=1, \dots, n$. 
\end{lemma}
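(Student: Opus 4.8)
The plan is to substitute the polynomial identity $x^k = \sum_{j=1}^k \stirling{k}{j}(x)_j$ (which, as noted just above the statement, holds as an equality of polynomials because it holds for infinitely many values of $N$) into the left-hand side of the hypothesis, and then to read off the coefficients of the result in the Pochhammer basis and match them against the right-hand side.

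First I would expand the left-hand side by replacing each monomial $x^k$ with its Pochhammer expansion, obtaining
\begin{equation*}
\sum_{k=1}^n a_k x^k = \sum_{k=1}^n a_k \sum_{j=1}^k \stirling{k}{j}(x)_j.
\end{equation*}
The next step is to interchange the order of summation. For a fixed $j$, the inner index $k$ contributes exactly when $j \le k \le n$, so collecting the coefficient of each $(x)_j$ yields
\begin{equation*}
\sum_{k=1}^n a_k x^k = \sum_{j=1}^n \left( \sum_{k=j}^n \stirling{k}{j}\, a_k \right)(x)_j.
\end{equation*}

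Finally, since $\{(x)_j\}_{j=0}^{\infty}$ is a basis for the space of polynomials (as remarked before the statement), the coefficient of $(x)_j$ in this expansion must equal the coefficient $b_j$ appearing in $\sum_{j=1}^n b_j (x)_j$, for each $j=1,\dots,n$. Comparing the two expressions gives $b_j = \sum_{k=j}^n \stirling{k}{j}\, a_k$, which is the claimed formula. I do not expect any genuine obstacle here; the only point requiring a little care is the reindexing of the double sum, which goes through cleanly precisely because $\stirling{k}{j} = 0$ whenever $j > k$, so restricting the outer sum to $k \ge j$ introduces no spurious terms.
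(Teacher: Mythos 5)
Your proof is correct and follows exactly the route the paper intends: substitute the Stirling-number identity $x^k = \sum_{j=1}^k \stirling{k}{j}(x)_j$ (stated just before the lemma), interchange the order of summation, and compare coefficients in the Pochhammer basis $\{(x)_j\}$. The paper leaves these steps implicit ("yielding the following result"), so your write-up simply makes its argument explicit, with the reindexing handled cleanly.
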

\bigskip

If we assume that the Conjecture \ref{conj} is true, we get the following equality for every positive integers $k$ and $N$ 
$$\sum_{j=1}^{k+1} \mathcal{F}(2k, j)(N)_j = \sum_{j=2}^{k+1} (-1)^{k-j+1}f_{k-1,k-j+1} N^{j}, $$
using Lemma \ref{cambioUsual-Poch} we get that the latter is equivalent to 
\begin{equation}
\label{conjforG}
\mathcal{F}(2k, j) = \sum_{r=j}^{k+1} (-1)^{k-r+1} \stirling{r}{j}f_{k-1, k-r+1}.
\end{equation}
The above formula turns out to be true for all $j$ when $k\leq 5$. Making use of a computer to do the calculations, for $k=6$ and $j=3$, we obtained that the right hand side is 10988 while $\mathcal{F}(6,3)$ is actually 11000. This disproves the conjecture.

On the other hand, we can use Lemma \ref{cambioPoch-Usual} to obtain a formula in the usual basis, yielding the following theorem. 

\begin{theorem}
\label{theorem}
For every $k$ and $N$ positive integers we have that $\mathbb{E}[\mathrm{tr}(\rho_N^k)] = N^{-2k-1} \sum_{j=1}^{k+1} a_j N^j$, where
\begin{equation}
\label{formulafinal}
a_j= \sum_{r=j}^{k+1} (-1)^{r-j}e_{r-j}(1, \dots, k) \mathcal{F}(2k, j). 
\end{equation}
\end{theorem}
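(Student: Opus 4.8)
The plan is to derive Theorem \ref{theorem} as an immediate consequence of the Pochhammer-basis formula \pref{formulapochhammer} together with the change-of-basis Lemma \ref{cambioPoch-Usual}. Equation \pref{formulapochhammer} asserts that for every positive integer $N$ one has $N^{2k+1}\mathbb{E}[\mathrm{tr}(\rho_N^k)] = \sum_{j=1}^{k+1}\mathcal{F}(2k,j)(N)_j$, and the right-hand side is, by construction, a fixed polynomial in $N$ already written in the Pochhammer basis $\{(x)_j\}_{j\ge 0}$.

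First I would fix $k$ and set $P(x) := \sum_{j=1}^{k+1}\mathcal{F}(2k,j)\,(x)_j$. Since every factor $(x)_j$ with $j\ge 1$ is divisible by $x$, the polynomial $P$ has vanishing constant term and degree at most $k+1$, so it admits a unique monomial expansion $P(x)=\sum_{j=1}^{k+1}a_j x^j$. These coefficients $a_j$ are exactly the quantities the theorem describes, and to identify them I would apply Lemma \ref{cambioPoch-Usual} in the direction it is stated, taking $n=k+1$ and $b_r=\mathcal{F}(2k,r)$; reading off the resulting expression for each $a_j$ yields formula \pref{formulafinal}.

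Finally, I would combine the two displays: since $P(N)=N^{2k+1}\mathbb{E}[\mathrm{tr}(\rho_N^k)]$ for every positive integer $N$, dividing by $N^{2k+1}$ gives $\mathbb{E}[\mathrm{tr}(\rho_N^k)]=N^{-2k-1}\sum_{j=1}^{k+1}a_j N^j$, as claimed. There is no genuine obstacle here; the theorem is essentially a corollary of \pref{formulapochhammer} and Lemma \ref{cambioPoch-Usual}. The only point demanding care is clerical: matching the dummy summation index and the range of the elementary symmetric polynomials in Lemma \ref{cambioPoch-Usual} to the variable appearing in $\mathcal{F}(2k,\cdot)$, and noting that the passage from an identity valid at each integer $N$ to an identity of polynomials is automatic, because the right-hand side of \pref{formulapochhammer} is already a polynomial in $N$ so the two sides agree as polynomials rather than merely at integer points.
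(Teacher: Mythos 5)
Your proposal is correct and is precisely the paper's own argument: the paper obtains the theorem by applying Lemma \ref{cambioPoch-Usual} to the Pochhammer-basis identity \pref{formulapochhammer}, exactly as you do, with the same implicit step of treating the identity as one of polynomials. One caveat worth recording: a literal application of the lemma with $n=k+1$ and $b_r=\mathcal{F}(2k,r)$ gives $a_j=\sum_{r=j}^{k+1}(-1)^{r-j}e_{r-j}(1,\dots,r-1)\,\mathcal{F}(2k,r)$, so the displayed formula \pref{formulafinal} --- which prints $e_{r-j}(1,\dots,k)$ and $\mathcal{F}(2k,j)$ --- contains two index typos in the paper itself (only the $r=k+1$ term has argument list $1,\dots,k$, and the factor $\mathcal{F}(2k,j)$ should be $\mathcal{F}(2k,r)$); your derivation yields the correct version, not the literal printed one.
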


\section{Final remarks}

We used a computer to calculate the values of $\mathcal{F}(2k, j)$. Since the conjecture was based in the first four values of $k$, it is not a surprise that equation \pref{conjforG} holds for all $j$ when $k\leq 4$. However, it was surprising that it also turned out to be true when $k=5$,  but not true in general for greater values of $k$. It is also worth remarking how small is the error of the proposed formula for $\mathcal{F}(2k, j)$ when $k$ is small and that in fact it is always true for the cases $\mathcal{F}(2k, 1), \mathcal{F}(2k, 2)$ and $\mathcal{F}(2k, k+1)$ \footnote{It is trivial that $\mathcal{F}(2k,1)=1$, on the other hand, using the series expansion definition of $f_{k,j}$ given in \cite{Francisco} (Definition 2.2),  it can be shown directly that the right hand of \pref{conjforG} is 1 when $j=1$, showing that formula \pref{conjforG} holds for $j=1$. With less ease, but with elementary combinatorial methods it can be shown that $\mathcal{F}(2k, k+1) = \frac{1}{k+1}{{2k}\choose{k}}$ and that $\mathcal{F}(2k, 2) = {{2k}\choose{k}}-1$. The latter equation, together with equation \pref{formulapochhammer} yields that when $N=2$ the $k$-th momment of the ESD of the squarred unimodular matrix is $2^{-2k-1}{{2k}\choose{k}}$, confirming that in this case, the ESD is the arcsine distribution supported in $[0,2]$.}. 

In this article we append two tables; one of them presenting the values of $\mathcal{F}(2k, j)$ for all $1\leq k \leq 11$ and all $1\leq j \leq k+1$, the other table presents the respective values of the right hand side of \pref{conjforG}.  We believe that the right hand side of \pref{conjforG} is actually counting a proper subfamily of the partitions whose associated quotient is a d.d.c.g., this is supported by the fact that the proposed formula has been always seen to be less or equal than $\mathcal{F}(2k, j)$. When there are few blocks, say one or two, or when they are too many, say $k+1$ or $k$, the partitions whose associated quotient is a d.d.c.g. have a very ordered structure. On the other hand, when there are more than two blocks, but not too many, and we have enough vertices, i.e. $k$ is big, the partitions that satisfy the wanted property have no particular structure at all. We think that it is in this case when ``rare" partitions appear that are not counted by the proposed formula. 

Below we show the explicit coefficients in the formula \pref{formulafinal} when $k=6$ and $k=7$.
$$N^{13}\mathbb{E}[\mathrm{tr}(\rho_N^6)] = 132 N^{7}-495 N^{6}+772 N^5- 624 N^4 +262 N^3 - 46 N^2.$$
$$N^{15}\mathbb{E}[\mathrm{tr}(\rho_N^7)]= 429 N^8 - 2002 N^7 + 4039 N^6 -4550 N^5 + 3073 N^4 -1204 N^3 + 216 N^2.$$

Finally, we warn the reader that several attempts were made, using the On-line Encyclopedia of Integer Sequences, to find explicit formulas for the numbers $\mathcal{F}(2k, j)$ with $j$ different from 1, 2, $k$ and $k+1$, and to find formulas for the   coefficients given in Theorem \ref{theorem} for the monomials $N^j$ with $j$ different from 0, 1, $k$ and $k+1$ . It was not even possible to arrive to a conjecture, making us believe that there is no ``nice" general formula for the numbers $\mathcal{F}(2k,j)$ nor for the moments of the mean ESD of $\rho_N$.

\section*{Acknowledgements}

I thank Octavio Arizmendi for introducing me to traffic-free probability and for his valuable comments on this work. I also thank Jorge Fernandez for providing the code to do the computer calculations of the formulas above. Finally, I thank Daniel Perales for providing a nice and simple proof for the equality $\mathcal{F}(2k, 2) = {{2k}\choose{k}}-1$.

\pagebreak

 \begin{center}
 \begin{Table}
 \resizebox{\textwidth}{!}{
\begin{tabular}{ | {c} | *{20}{| c  } }
\hline
 & $2k=2$ & $2k=4$ & $2k=6$  & $2k=8$ & $2k=10$ & $2k=12$ & $2k=14$ & $2k=16$ & $2k=18$ & $2k=20$ & $2k=22$ \\
 \hline 
 \hline
  $j=1$ & 1 & 1  & 1 & 1 & 1 & 1 & 1  & 1   & 1  & 1 & 1               \\
 $j= 2$ & 1 & 5 &  19 & 69 & 251 & 923 &  3431  & 12869 & 48619  & 184755 & 705431      \\
 $j=3$  &  &  2  & 24  & 202 &  1520  & 11000   &  78806  & 566234 & 4105320 & 30114712 & 223707242    \\
 $j=4$  & & & 5 & 112  & 1665   & 21121   &    249137 & 2840928 & 31954529 & 358556005 & 4040139741         \\ 
 $j=5$  & & & & 14  & 510  & 11827  &   226205  & 3918842     & 64318998 &  1025094615 & 16099942903           \\
 $j=6$  & & & & & 42 & 2277 &   76111 & 2044444  & 48721602 & 1081809409  & 23011155057           \\
 $j=7$ & & & & & & 132  & 10010  & 456456 &  16387776 & 513317334  & 14774891956               \\
 $j=8$ & & & & & & &      429     & 43472   & 2596596 & 120110865 & 4781025480                  \\
 $j=9$ & & & & & & & & 1430  & 186966  & 14177490    & 821327364                  \\ 
 $j=10$ & & & & & & & & &  4862 & 797810 & 74918558 \\ 
 $j=11$ & & & & & & & & & & 16796 & 3382456  \\
 $j=12$ & & & & & & & & & & &  58786  \\
\hline 
\end{tabular}}
\caption{Values of $\mathcal{F}(2k, j)$.}
\end{Table}
 \end{center}
 \bigskip
 
 \begin{center}
 \begin{Table}
  \resizebox{\textwidth}{!}{
 \begin{tabular}{| {c} |  *{20}{| c}}
 \hline
 & $2k=2$ & $2k=4$ & $2k=6$  & $2k=8$ & $2k=10$ & $2k=12$ & $2k=14$ & $2k=16$ & $2k=18$ & $2k=20$ & $2k=22$ \\
 \hline 
 \hline
 $j=1$ & 1 & 1  & 1 & 1 & 1 & 1 & 1  & 1   & 1  & 1 & 1                   \\
 $j=2$ &  1 & 5 & 19 & 69  & 251 & 923 & 3431 & 12869 & 48619  & 184755 & 705431               			  \\
 $j=3$ & & 2 & 24 & 202 & 1520 &  10988 & 78428 & 559130	 & 4001136	 & 28795012 & 208515164					 \\
 $j=4$ & & & 5 & 112 & 1665 & 21109 & 248339 & 2813712 & 31278521	 & 344578585 & 3783013707				\\
 $j=5$ & 	&	&	& 14	 & 510	& 	11825	& 225862 & 3896970 & 63425538 & 996691265 & 15328496106			\\
 $j=6$ & 	& & & & 42 & 2277	& 76076	& 2039128	& 48338310 & 1062780789 & 22255811424					\\
 $j=7$ & & & & & & 132 & 10010	& 456092 & 16327752	& 508232748	& 14469523530								\\
 $j=8$ & 	& & & & & & 429	& 43472	& 2593656 & 119555220	& 4725337221													\\
 $j=9$ & 	& & & & & & & 1430 & 186966 & 14157090 & 816841806																\\
$j=10$ & & & & & & & & & 4862 & 797810 	& 74790650																							\\
$j=11$ & & & & & & & & & & 16796 & 3382456																								\\
$j=12$ & & & & & & & & & & & 58786																								\\
\hline
 \end{tabular}}
 \caption{Conjectured values from equation \pref{conjforG}.}
\end{Table}
 \end{center}

\end{document}